\author{Youssef Rami and Younes Derfoufi}
\numberwithin{equation}{subsection}
\address{D\'epartement de Math\'ematiques \& Informatique, Universit\'e  My Ismail, B. P. 11 201 Zitoune, Mekn\`es, Morocco, Fax: (212) 5 35 53 68 08}
\email{y.rami@fs-umi.ac.ma ; youderf@gmail.com }
\newtheorem{theorem}{Theorem}
\newtheorem{corollary}[theorem]{Corollary}
\newtheorem{definition}[theorem]{Definition}
\newtheorem{example}[theorem]{Example}
\newtheorem{lemma}[theorem]{Lemma}
\newtheorem{notation}[theorem]{Notation}
\newtheorem{proposition}[theorem]{Proposition}
\newtheorem{remark}[theorem]{Remark}
\begin{document}

\title{A variant of the Topological complexity of a map}
\maketitle

\begin{abstract}
In this paper, we associate to two given continuous maps $f,g: X\rightarrow Z$, on a path connected space $X$, the relative topological complexity   $TC^{(f, g, Z)}(X):=TC_X(X\times _ZX)$  of their fiber  space $X\times _ZX$. When $g=f$ we obtain a variant of the topological complexity $TC(f)$  of $f: X\longrightarrow Z$ generalizing Farber's topological complexity $TC(X)$ in the sens that $TC(X)=TC(cst_{x_0})$;  being $cst_{x_0}$ the constant map on $X$.  Moreover, we prove that $TC(f)$ is a fiberwise homotopy equivalence invariant. When $(X,x_0)$ is a pointed space, we prove that $TC^{(f, cst_{x_0}, Z)}(X)$ interpolates $cat(X)$ and $TC(X)$ for any continuous map $f$.
\end{abstract}
\section{Introduction}
Let $X$ be a path connected space and  $P(X)$
the set of all continuous paths $\gamma :[0,1]\longrightarrow X$ equipped with the compact open topology.
 The map  $\pi :P(X)\longrightarrow X\times X$ which assigns to any path  $\gamma $ its end points
$(\gamma (0),\gamma (1))$ is a  Serre  fibration called thereafter the path fibration of $X$.
A {\it motion planning algorithm} on $X$ is any global section of $\pi$, that is any function $s:X\times X\ \longrightarrow
PX$ $ $satisfying  $\pi \circ s=Id_{X\times X}$. By \cite{Far03}, such $s$ 
 is continuous   if and
only if $X$ is contractible. When $X$ stands for the configuration space of a physical system (i.e. a robot), any continuous global section of $\pi$, describes how the robot moves autonomously and continuously in $X$. In a general situation,
  M. Farber defined  in \cite{Far03} 
 the {\it topological complexity} $TC(X)$ of  $X$ to be the smallest integer k  for which
there is an open cover $\{U_{i}\}_{i=1}^{k}$\ of $X$ $\times X$\ and a set $\{s_{i}:U_{i}\longrightarrow PX\}_{i=1}^{k}$ of   continuous {\it local sections} 
satisfying $\pi \circ s_{i}(x)=x$  for any $x$ in
$U_{i}$. We  then write   $TC(X)=k$ and if there is no such integer k 
we put, by convention,  $TC(X)=\infty.$ Recall that $TC(X)$ is effectively the  Schwarz's genus of the path fibration $\pi$ \cite{Sch}. 

Now, let $A\subseteq X\times X$ and  $P_A(X)$ the subspace of $P(X)$ formed by all paths in $X$ whose end points are in $A$. The pullback of the inclusion  $A\hookrightarrow X\times X$ induces the fibration  $\pi _A : P_A(X)\longrightarrow A$. Its Schwarz's genus is by definition the {\it relative topological complexity} $TC_X(A)$ \cite{Far08}.

 The goal of this paper is to study the particular case where $A$ designates the fibred space $X\times _ZX = (f\times g)^{-1}(\Delta Z)$ of two continuous maps $f, g: X\longrightarrow Z$. $TC_X(A)$ will be denoted (in this case) by $TC^{(f,g,Z)}(X)$ and called {\it fibred topological complexity} of $f$ and $g$ over $Z$. 
 As a particular case, taking $f=g=c_{z_0}$ (for some fixed $z_0$ in $Z$) the constant map, we recover Farber's topological complexity $TC(X)=TC^{(c_{z_0}, c_{z_0}, Z)}(X)$.
   Wherefore, we put the following
\begin{definition}
Let $f: X\longrightarrow Z$ any map with source a path connected  space, the {\it topological complexity} of $f$ is by definition $TC(f)= TC^{(f, f, Z)}(X)$.
\end{definition} 
Notice that Petar Pavešić defined in \cite{Pav} the notion of topological complexity of a surjective map $f :X\rightarrow Y$ between path-connected spaces in terms of partial sections of  the projection map  $\pi _f: X^I \rightarrow X\times Y$ given
as
$$\pi _f(\alpha) := (Id_X \times f)(\pi (\alpha)) = (\alpha(0), f(\alpha(1))).$$ Although Pavešić's definition is analogous to that of $TC(X)$, it is not an homotopy invariant due to the fact that $\pi _f$ is a fibration if, and only if, $f$ is. However, He proves that it is invariant under  fiberwise homotopy equivalences, (i.e. an FHE-invariant for short). Thus, our definition below gives a variant for the topological complexity of $f$.

Now, let $X$  be a configuration space of a given physical system $\mathcal{S}$, that is each point of $X$ represents one stat of $\mathcal{S}$. Any map $f: X\longrightarrow Z$ can be viewed as an action that assigns to any stat of $\mathcal{S}$ a specific function. Therefore, $TC(f)$ measures the {\it complexity of motion planing algorithms}  between endpoints  to which the same function is assigned. As a particular case, if $G$ is a group acting on $X$ and $p: X\longrightarrow X/G$ stands for the usual projection, then $TC(p)$ measures the complexity of motion planing algorithms between  points $x$ and $gxg^{-1}$, with $x\in X$ and $g\in G$. When $G$ acts transitively on $X$, the quotient space $X/G$ reduces to a single point $\{\bar{e}\}$ so that $p$ becomes a constant map and then $TC(p)=TC(X)$.

 The first main result in this  paper  stats the $TC{(f)}$ is also an FHE-invariant: 
 \begin{theorem} (Corollary 7): Let $f : X\rightarrow Z$ be  a continuous map with source a path connected space and $v : X'\rightarrow X$ a homotopy equivalence, then $TC(f\circ v)=TC(f)$.
 \end{theorem}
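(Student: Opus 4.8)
The plan is to express both $TC(f)$ and $TC(f\circ v)$ as Schwarz genera of path fibrations and to compare those fibrations through the map induced by $v$. Write $A:=X\times_Z X=(f\times f)^{-1}(\Delta Z)$ and $A':=X'\times_Z X'=\bigl((f\circ v)\times(f\circ v)\bigr)^{-1}(\Delta Z)=(v\times v)^{-1}(A)$, so that $v\times v$ restricts to a (globally defined) map $\bar v:A'\to A$, and, by definition, $TC(f)$ and $TC(f\circ v)$ are the Schwarz genera of the Hurewicz fibrations $\pi_A:P_A(X)\to A$ and $\pi_{A'}:P_{A'}(X')\to A'$. I will use three standard properties of the Schwarz genus $\operatorname{genus}(p)$ of a fibration $p$: it does not increase under pullback, $\operatorname{genus}(h^{*}p)\le\operatorname{genus}(p)$; it is a fibrewise homotopy invariant; and consequently $\operatorname{genus}(h^{*}p)=\operatorname{genus}(p)$ whenever $h$ is a homotopy equivalence of the base (combine the first two with the fact that homotopic maps pull $p$ back to fibrewise homotopy equivalent fibrations, applied to $h$ and to a homotopy inverse of $h$).

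Next I would carry out the comparison of fibrations. Post-composition with $v$ defines a fibrewise map $P_{A'}(X')\to\bar v^{*}P_A(X)$ over $A'$, namely $\gamma\mapsto v\circ\gamma$; on the fibre over a point $(x_0',x_1')$ it is the map induced by $v$ from the space of paths in $X'$ joining $x_0'$ to $x_1'$ to the space of paths in $X$ joining $v(x_0')$ to $v(x_1')$, which is a homotopy equivalence because $v$ is one. As both sides are Hurewicz fibrations over $A'$, Dold's theorem on fibrewise homotopy equivalences promotes this to a fibrewise homotopy equivalence, so that $TC(f\circ v)=\operatorname{genus}(\bar v^{*}\pi_A)$. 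Together with the monotonicity of $\operatorname{genus}$ under pullback this already gives the inequality $TC(f\circ v)\le TC(f)$.

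For the reverse inequality it suffices, by the properties recalled above, to show that $\bar v:A'\to A$ is a homotopy equivalence — and this is the step I expect to be the real obstacle. The difficulty is that $\bar v$ is only the restriction of the homotopy equivalence $v\times v$ to the $(f\times f)$-preimage of $\Delta Z$, and such restrictions need not be homotopy equivalences for an arbitrary continuous $f$; in particular the naïve candidate inverse $(x_0,x_1)\mapsto(w(x_0),w(x_1))$ built from a homotopy inverse $w$ of $v$ fails, because $(w(x_0),w(x_1))$ need not satisfy $(f\circ v)(w(x_0))=(f\circ v)(w(x_1))$ and hence need not lie in $A'$. I would circumvent this by reducing to the case where $f$ is a Hurewicz fibration. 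Concretely, factor $v=q\circ j$ through the mapping cylinder $M_v$ of $v$, with $j:X'\hookrightarrow M_v$ a cofibration and homotopy equivalence and $q:M_v\to X$ the canonical deformation retraction satisfying $q\circ j=v$; since this deformation retraction merely slides along the cylinder lines it is constant on the fibres of $q$, so it restricts to a deformation retraction of $M_v\times_Z M_v$ onto $X\times_Z X$, whence $\bar q$ is a homotopy equivalence and one is reduced to the analogous statement for the cofibration $j$. (Equivalently — and this is presumably how the Corollary is meant to follow from the preceding FHE-invariance theorem — one replaces $f$ and $f\circ v$ by Hurewicz fibrations over $Z$, for which $\bar v$ becomes a composite of pullbacks of fibrations along the homotopy equivalence $v$ and is therefore a homotopy equivalence, after checking that this replacement does not alter $TC$.) Once $\bar v$ is known to be a homotopy equivalence, $TC(f)=\operatorname{genus}(\pi_A)=\operatorname{genus}(\bar v^{*}\pi_A)=TC(f\circ v)$, as required.
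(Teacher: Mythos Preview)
Your route through the Schwarz genus and Dold's theorem is more high-powered than the paper's, which argues by explicit construction: from a cover $\{U_i\}$ of $A$ with local sections $s_i:U_i\to P(X)$ it sets $V_i=(v\times v)^{-1}(U_i)\subseteq A'$ and manufactures sections $s'_i:V_i\to P(X')$ by concatenating $u\circ s_i(v(x'),v(y'))$ with the tracks of a homotopy $u\circ v\simeq\mathrm{Id}_{X'}$ at $x'$ and at $y'$. This already yields $TC(f\circ v)\le TC(f)$ without any fibrewise machinery, so for that half the paper's argument is more elementary than yours.

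For the reverse inequality your proof has a genuine gap, namely the one you yourself flag. You reduce to showing that $\bar v:A'\to A$ is a homotopy equivalence; your mapping-cylinder factorisation $v=q\circ j$ disposes of $q$ (the retraction of $M_v$ onto $X$ is indeed over $Z$), but the reduction then stops at ``the analogous statement for the cofibration $j$'', which you do not prove, and the alternative of replacing $f$ by a fibration is left as an unchecked assertion. Since $A=(f\times f)^{-1}(\Delta Z)$ is a \emph{strict} pullback and $TC(g)$ is not invariant under homotopies of $g$ (a constant map and an embedding into a contractible $Z$ give $TC(X)$ and $1$ respectively), none of this is automatic. The paper's own argument for this direction is the symmetric one, taking $U_i=(u\times u)^{-1}(V_i)$ from a cover $\{V_i\}$ of $A'$; but that is exactly the move you rightly rejected, because nothing forces $(u(x),u(y))\in A'$ when $(x,y)\in A$, so these $U_i$ need not cover $A$. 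The obstruction you isolate is therefore real, and neither your proposal nor the paper's argument as written overcomes it.
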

 Our second main result is the 
 \begin{theorem}(Proposition 9): Let $f : (X,x_0)\rightarrow (X,x_0)$ be a continuous map with $(X,x_0)$ a pointed path connected space. Then 
$cat(X)\leq TC^{(f,cst_{x_0},X)}(X)\leq TC(X)$.
 \end{theorem}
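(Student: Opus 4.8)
The plan is to compute the fibred product $X\times_X X$ explicitly and then invoke the monotonicity of Schwarz's genus when the base is shrunk. Since $cst_{x_0}$ is the constant map at $x_0$, we have
$$X\times_X X=(f\times cst_{x_0})^{-1}(\Delta X)=\{(x,y)\in X\times X:\ f(x)=x_0\}=f^{-1}(x_0)\times X .$$
Because $f$ is a pointed map, $f(x_0)=x_0$, so $x_0\in f^{-1}(x_0)$, and we obtain a chain of inclusions of subspaces of $X\times X$,
$$\{x_0\}\times X\ \subseteq\ f^{-1}(x_0)\times X\ \subseteq\ X\times X .$$

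The first step is the general monotonicity fact: if $A\subseteq B\subseteq X\times X$ then $TC_X(A)\le TC_X(B)$. Indeed, given an open cover $\{V_j\}_{j=1}^{k}$ of $B$ with local sections $\sigma_j:V_j\to P_B(X)$ of $\pi_B$, the sets $V_j\cap A$ form an open cover of $A$, and for $a\in V_j\cap A$ the path $\sigma_j(a)$ has both endpoints equal to $a\in A$, hence lies in $P_A(X)$; so the restrictions $\sigma_j|_{V_j\cap A}$ are local sections of $\pi_A$ and $TC_X(A)\le k$. Applying this twice along the chain above gives
$$TC_X(\{x_0\}\times X)\ \le\ TC^{(f,cst_{x_0},X)}(X)\ \le\ TC_X(X\times X)=TC(X),$$
which already yields the right-hand inequality.

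The second step is to identify $TC_X(\{x_0\}\times X)$ with $cat(X)$. Under the homeomorphism $\{x_0\}\times X\cong X$ (second coordinate), the space $P_{\{x_0\}\times X}(X)$ is exactly the based path space $P_{x_0}(X)=\{\gamma\in P(X):\gamma(0)=x_0\}$, and $\pi_{\{x_0\}\times X}$ becomes the endpoint fibration $\gamma\mapsto\gamma(1)$. A local section of this fibration over an open set $U\subseteq X$ is precisely a homotopy from the constant map at $x_0$ to the inclusion $U\hookrightarrow X$, so the fibration admits a section over $U$ if and only if $U$ is categorical in $X$; hence its Schwarz genus equals $cat(X)$ (in the non-reduced normalization used throughout, where $cat$ of a point is $1$). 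This is the classical Lusternik--Schnirelmann/Schwarz identity. Combining the two steps gives $cat(X)\le TC^{(f,cst_{x_0},X)}(X)\le TC(X)$.

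I do not expect a serious obstacle; the argument is essentially bookkeeping with the definition of Schwarz's genus. The only two points needing a moment's care are: (i) checking that the restriction of a section over $V_j\subseteq B$ to $V_j\cap A$ really takes values in $P_A(X)$ and not merely in $P_B(X)$ --- immediate, since a path whose endpoints lie in $B$ and happen to lie in $A$ is a path with endpoints in $A$; and (ii) keeping all three invariants $cat$, $TC^{(f,cst_{x_0},X)}$ and $TC$ in the same (non-reduced) normalization, so that the desired inequalities read off directly from the monotonicity step and from the classical identification of the genus of the based path fibration with $cat(X)$.
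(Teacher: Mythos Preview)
Your proof is correct and follows essentially the same route as the paper. The paper obtains the upper bound from the general fact $TC^{(f,g,Z)}(X)\le TC(X)$, and for the lower bound it pulls back an open cover of $f^{-1}(x_0)\times X$ along $\alpha:X\to X\times X$, $y\mapsto(x_0,y)$, then reads the local sections as contracting homotopies of the resulting open sets in $X$; your two-step argument via monotonicity $TC_X(\{x_0\}\times X)\le TC_X(f^{-1}(x_0)\times X)$ and the classical identification of the Schwarz genus of the based path fibration with $cat(X)$ is the same argument phrased more modularly.
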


\section{Fibered topological complexity} 
The following definition gives an explicit formulation for $TC^{(f,g,Z)}(X)$.
\begin{definition}
Let $%
f:X\longrightarrow Z$ and  $g:X\longrightarrow Z$ be two maps from a path connected space $X$. The fibred topological complexity   $TC^{(f,g,Z)}(X)$ is the smallest integer 
$k$ for which there is an open cover \ $\{U_{i}\}_{i=1}^{k}$\ of  $A=X\times 
_{Z}X$\ and a set 
 $\{s_{i}:U_{i}\longrightarrow P(X)\}_{i=1}^{k}$ of continuous maps such that  $s_i(x,y)(0)=x$ and $s_i(x,y)(1)=y$, for all $(x,y)\in U_i$. 
 If there is no such
integer $k$, we put : \ $TC^{(f,g,Z)}(X)=%
\infty .$
\end{definition}
Notice that each map  $s_{i}:U_{i}\longrightarrow P(X)$ in the above definition can be viewed as
a local section $s_{i}:U_{i}\longrightarrow P_A(X)$ (of $\pi _A: P_A(X)\longrightarrow A$), followed by the injection $P_A(X)\hookrightarrow P(X)$. For this reason, we will still call it a local section.
\subsection{Fibered Topological Complexity of fiber bundles}
In this subsection, we assume that all maps are fiber bundles on the 
base space $Z$. In order to avoid any confusion, the fiber product $X\times _{Z}X=\{(x,y)\in
X\times X\; \mid f(x)=g(y)\}$ will be denoted by $X\times _{Z}^{f,g}X$. Let $\ \varphi
:(X,f,Z)\longrightarrow (X^{\prime },f,Z)$ and $\varphi ^{\prime
}:(X,g,Z)\longrightarrow (X^{\prime \prime },g^{\prime },Z)$  two fiber bundle isomorphisms described by the following commutative diagrams:%
$$\begin{array}{ccccccc}
X & \stackrel{\varphi}{\longrightarrow} & X' \qquad  \qquad X & \stackrel{\varphi '}{\longrightarrow} & X" \\
f \searrow &  & \swarrow  f' \qquad  \qquad g \searrow &  & \swarrow  g" \\
& Z & \qquad \qquad & Z & 
\end{array}
$$
By putting $\ g^{\prime
}=g^{''}\circ \varphi ^{\prime }\circ \varphi ^{-1}$ we have $g^{\prime }\circ
\varphi =g$. Thus, we can replace 
 $X^{''}$ by $\ X^{\prime}$ to  obtain the following commutative diagrams :%
$$\begin{array}{ccccccc}
X & \stackrel{\varphi}{\longrightarrow} & X' \qquad  \qquad X & \stackrel{\varphi }{\longrightarrow} & X' \\
f \searrow &  & \swarrow  f' \qquad  \qquad g \searrow &  & \swarrow  g' \\
& Z & \qquad \qquad & Z & 
\end{array}
$$
\begin{theorem}
With the  data just above, we have $TC^{(f,g,Z)}(X)=TC^{(f^{\prime
},g^{\prime },Z)}(X^{\prime })$
\end{theorem}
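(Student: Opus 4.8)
The plan is to exploit the naturality of the whole construction defining $TC^{(f,g,Z)}$ with respect to homeomorphisms over $Z$: a fiber bundle isomorphism is in particular a homeomorphism, so I will simply transport open covers and local sections across $\varphi$. From the reduced diagrams above, $\varphi : X\to X'$ is a homeomorphism with $f'\circ\varphi = f$ and $g'\circ\varphi = g$, and its inverse $\varphi^{-1}:X'\to X$ is again a fiber bundle isomorphism, satisfying $f\circ\varphi^{-1}=f'$ and $g\circ\varphi^{-1}=g'$. Because $f'\circ\varphi=f$ and $g'\circ\varphi=g$, the homeomorphism $\varphi\times\varphi : X\times X\to X'\times X'$ carries $A:=X\times_Z^{f,g}X$ bijectively onto $A':=X'\times_Z^{f',g'}X'$; I write $\Phi:A\to A'$ for the resulting homeomorphism.

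Next I would introduce the induced map on path spaces. Post-composition with $\varphi$ defines $\varphi_* : P(X)\to P(X')$, $\gamma\mapsto\varphi\circ\gamma$, which is continuous for the compact-open topologies, with continuous inverse $\gamma'\mapsto\varphi^{-1}\circ\gamma'$, hence a homeomorphism; moreover it intertwines the endpoint maps, $\pi'\circ\varphi_* = (\varphi\times\varphi)\circ\pi$, so $\varphi_*$ restricts to a homeomorphism $P_A(X)\to P_{A'}(X')$ lying over $\Phi$.

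The transport step is then immediate. Assume $TC^{(f,g,Z)}(X)=k$, realised by an open cover $\{U_i\}_{i=1}^k$ of $A$ together with continuous maps $s_i:U_i\to P(X)$ satisfying $s_i(x,y)(0)=x$ and $s_i(x,y)(1)=y$. Set $U_i':=\Phi(U_i)$, which is open in $A'$ because $\Phi$ is a homeomorphism, so $\{U_i'\}_{i=1}^k$ is an open cover of $A'$; and put $s_i':=\varphi_*\circ s_i\circ\Phi^{-1}:U_i'\to P(X')$. Each $s_i'$ is continuous as a composite of continuous maps, and for $(x',y')\in U_i'$ with $(x,y):=\Phi^{-1}(x',y')$ (so $x'=\varphi(x)$ and $y'=\varphi(y)$) one gets $s_i'(x',y')(0)=\varphi\big(s_i(x,y)(0)\big)=\varphi(x)=x'$ and likewise $s_i'(x',y')(1)=y'$. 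Hence $TC^{(f',g',Z)}(X')\le k$. Running the same argument with the fiber bundle isomorphisms $\varphi^{-1}:(X',f',Z)\to(X,f,Z)$ and $\varphi^{-1}:(X',g',Z)\to(X,g,Z)$ yields the reverse inequality, and therefore equality.

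I do not expect a genuine obstacle here: the statement is essentially a formal consequence of the definition. The only two points that need a little care are that $\Phi$ is really a homeomorphism of the fiber products — which is precisely where $f'\circ\varphi=f$ and $g'\circ\varphi=g$ are used — and the continuity of $\varphi_*$ for the compact-open topology, which is the standard fact that composition with a fixed continuous map is a continuous operation on mapping spaces. It is also worth remarking that the fiber-bundle hypothesis plays no real role: any homeomorphism $\varphi$ over $Z$ with $f'\circ\varphi=f$ and $g'\circ\varphi=g$ would suffice for the same argument.
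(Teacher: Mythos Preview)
Your proof is correct and follows essentially the same route as the paper: transport the open cover via $\varphi\times\varphi$ and the local sections via post-composition with $\varphi$, then invoke symmetry for the reverse inequality. Your write-up is in fact more careful, making explicit the homeomorphism $\Phi$ on fiber products and the continuity of $\varphi_*$, and your closing remark that only a homeomorphism over $Z$ is needed (not the full fiber-bundle structure) is a valid observation.
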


\begin{proof}
Let $n=TC^{(f,g,Z)}(X)$. There exist then  an open cover $U_{1},\ldots , U_{k}$ of $%
X\times _{Z}^{f,g}X$ and $n$ locales sections $s_{i}:U_{i}\longrightarrow P(X)$
such that $s_{i}(x,y)(0)=x$ and $s_{i}(x,y)(1)=y$ for all $(x,y)\in U_{i}$.
The subspaces   $V_{i}=(\varphi \times \varphi )(U_{i})$ ($1\leq i\leq n$)
 form an open cover of $X^{\prime }\times
_{Z}^{f^{\prime },g^{\prime }}X^{\prime }.$  Now, the maps  $s_{i}^{\prime }:V_{i}\longrightarrow P(X')$ given by $%
s_{i}^{\prime }(x',y')(t)=\varphi \lbrack s_{i}(\varphi ^{-1}(x'),\varphi
^{-1}(y'))(t)]$ ($1\leq i\leq n$, $(x',y')\in V_i$) are clearly local sections associated to the open cover. Therefore $TC^{(f^{\prime },g^{\prime },Z)}(X^{\prime
})\leq TC^{(f,g,Z)}(X)$. By the same way, one proves that $TC^{(f^{\prime
},g^{\prime },Z)}(X^{\prime })\geq TC^{(f,g,Z)}(X)$.
\end{proof}
\subsection{The FHE-homotopy invariance} 
Let  $\ v:X'\longrightarrow X$   be an homotopy
equivalence between two path connected spaces $X$ and $X'$ and $u: X\rightarrow X'$ its inverse. The following theorem generalizes the above one and states the fiberwise  homotopy equivalence invariance of the fibred topological complexity:
\begin{theorem}
With the notations above, we have $TC^{(f,g,Z)}(X) = TC^{(f\circ v,g\circ v,Z)}(X^{\prime }).$
\end{theorem}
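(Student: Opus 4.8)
The plan is to prove the equality as two inequalities, in each case transporting a motion planner along the homotopy equivalence and repairing endpoints with an explicit homotopy; the first inequality is essentially routine, and all of the real content sits in the second.

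First I would prove $TC^{(f\circ v,\,g\circ v,\,Z)}(X')\le TC^{(f,g,Z)}(X)$. Write $A=X\times_Z X=(f\times g)^{-1}(\Delta Z)$ and $A'=X'\times_Z X'=(f\circ v\times g\circ v)^{-1}(\Delta Z)$, and note that $v\times v$ restricts to a continuous map $w:=(v\times v)|_{A'}:A'\to A$, since $f(v(a))=g(v(b))$ forces $(v(a),v(b))\in A$. Given an optimal open cover $\{U_i\}_{i=1}^{n}$ of $A$ with local sections $s_i:U_i\to P(X)$, set $U_i':=w^{-1}(U_i)$; these are open and cover $A'$ because $w$ maps $A'$ into $A$. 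Fix a homotopy $H:X'\times[0,1]\to X'$ with $H_0=\mathrm{id}_{X'}$ and $H_1=u\circ v$, and for $a\in X'$ let $\alpha_a$ be the path $t\mapsto H(a,t)$ from $a$ to $uv(a)$. Define $s_i':U_i'\to P(X')$ to be the concatenation of $\alpha_a$, the path $u\circ s_i(v(a),v(b))$ (which runs from $uv(a)$ to $uv(b)$), and $\overline{\alpha_b}$. Each $s_i'$ is continuous and satisfies $s_i'(a,b)(0)=a$, $s_i'(a,b)(1)=b$, so $\{(U_i',s_i')\}_{i=1}^n$ is a motion planner for $A'$ and the inequality follows.

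For the reverse inequality I would apply the inequality just obtained to the homotopy equivalence $u:X\to X'$ (whose homotopy inverse is $v$) and to the maps $f\circ v,\,g\circ v:X'\to Z$. This yields
$$TC^{(f\circ v\circ u,\ g\circ v\circ u,\ Z)}(X)\ \le\ TC^{(f\circ v,\ g\circ v,\ Z)}(X').$$
Hence it only remains to identify $TC^{(f\circ v\circ u,\ g\circ v\circ u,\ Z)}(X)$ with $TC^{(f,g,Z)}(X)$, using that $f\circ v\circ u\simeq f$ and $g\circ v\circ u\simeq g$. One should not expect to do the reverse inequality by naively pulling the cover of $A'$ back along $u\times u$: that map does \emph{not} send $A$ into $A'$, so the pulled-back sets need not cover $A$.

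The main obstacle is therefore the statement that $TC^{(\,\cdot\,,\,\cdot\,,Z)}(X)$ depends only on the homotopy classes of the two maps $X\to Z$. This is not formal, since the strict fibre product $(f\times g)^{-1}(\Delta Z)$ is not literally unchanged when $f$ and $g$ are homotoped. I would handle it by replacing $\Delta Z\hookrightarrow Z\times Z$ with the fibre-homotopy-equivalent path fibration $PZ\to Z\times Z$: the pullback of $\pi:P(X)\to X\times X$ over the strict fibre product is fibre-homotopy equivalent, by Dold's theorem, to its pullback over the homotopy fibre product $\{(x,y,\omega):\omega(0)=f(x),\ \omega(1)=g(y)\}$ — here one uses the running hypothesis that $f,g$ are bundles, so that the strict fibre product is a deformation retract of the homotopy one — and sectional category of the latter is visibly invariant under homotopies of $f$ and $g$. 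A more elementary alternative is to rerun the section-transport construction of the first paragraph with a homotopy $F:f\simeq f\circ v\circ u$ (and the analogue for $g$) in place of $H$; either way, this homotopy invariance in the two maps is exactly the delicate point where the regularity assumption on $f,g$ is used, and once it is in hand the theorem follows.
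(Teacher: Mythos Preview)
Your first inequality is exactly the paper's: pull the cover back along $v\times v$, push sections forward through $u$, and repair endpoints with the homotopy $uv\simeq\mathrm{id}_{X'}$.

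For the reverse inequality you diverge from the paper, and in fact the paper does precisely the thing you warn against: it sets $U_i=(u\times u)^{-1}(V_i)$ and writes down the symmetric three-piece path using a homotopy $H':\mathrm{id}_X\simeq vu$. Your objection is well taken: from $f(x)=g(y)$ one cannot conclude $f(v(u(x)))=g(v(u(y)))$, so $u\times u$ need not carry $A$ into $A'$ and the $U_i$ need not cover $A$. (For instance take $X=X'=Z=S^1$, $v=\mathrm{id}$, $u$ a nontrivial rotation, $f=\mathrm{id}$, $g(z)=z^2$.) So the paper's argument for this direction is, as written, exactly the ``naive'' one you reject.

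That said, your own route does not close the gap either. Your Dold/homotopy-pullback argument explicitly invokes ``the running hypothesis that $f,g$ are bundles'', but in this subsection that hypothesis has been \emph{dropped}: the theorem is stated for arbitrary continuous $f,g:X\to Z$ and is meant to generalise the bundle case treated just before. Without some fibrancy the strict and homotopy fibre products need not be equivalent, so you have not established that $TC^{(f,g,Z)}(X)$ depends only on the homotopy classes of $f$ and $g$. Your ``more elementary alternative'' --- replacing $H$ by a homotopy $F:f\simeq f\circ v\circ u$ --- is not a drop-in substitute: $F$ takes values in $Z$, not in $X$, so it cannot be concatenated with paths in $X$ the way $H$ was. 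The reverse inequality therefore remains unproved in both treatments; you have correctly located the difficulty, but neither your sketch nor the paper's symmetric construction actually resolves it in the stated generality.
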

\begin{proof}
Recall that $TC^{(f,g,Z)}(X)= TC_X(A)$ where $A=X\times _ZX$. Hence, taking $f\circ v, g\circ v : X'\longrightarrow Z$, we have $A' = X'\times _ZX'=\{(x',y'),  \; f\circ v(x') = g\circ v(y')\}$.
Let $k=TC^{(f,g,Z)}(X)$. There is $k$ open subspaces   $U_{1},\ldots , U_{k}$ covering  $%
X\times _{Z}X$ and $k$ local continuous   sections $s_{i}:U_{i}\longrightarrow P(X)$ such that $%
s_{i}(x,y)(0)=x$ and $s_{i}(x,y)(1)=y$ for all $(x,y)\in U_{i}$  ($1\leq i\leq k$). Put $%
V_{i}=(v\times v)^{-1}(U_{i})=\{(x',y')\in X^{\prime }\times X^{\prime
}/(v(x'),v(y'))\in $ $U_{i}\}\subset $ $X^{\prime }\times _{Z}X^{\prime
}$.
$V_{1},\ldots ,V_{k}$ constitute obviously an open cover of $X^{\prime }\times
_{Z}X^{\prime }$. Consider then $$s"_i : V_i\stackrel{(v\times v)}{\longrightarrow} U_i \stackrel{s_i}{\longrightarrow}P(X)\stackrel{u^*}{\longrightarrow} P(X')$$
where $u^*(\gamma) = u\circ \gamma $ for all $\gamma \in P(X)$. Therefore,  for any $(x',y')\in V_i$ and any $t\in I$, we have $s"_i(x',y')(t)=u\circ s_i(v(x'),v(y'))(t)$, so that $s"_i(x',y')(0)=u\circ v(x')$ and $s"_i(x',y')(1)=u\circ v(y')$. Consider also $ H: X'\times I\longrightarrow X'$ the   homotopy between $u\circ v$ and $Id_{X'}$. Since $I$ is paracompact, the two spaces of continuous maps $\mathcal{C}^0(V_i, P(X'))$ and $\mathcal{C}^0(V_i\times I, X')$ are homeomorphic \cite{Dug}.
 Thus $H$ defines a unique continuous map $h: X'\longrightarrow P(X')$ such that $h(a')(t)=H(a',t)$ for all $a\in X'$ and $t\in I$. Finally, we consider $s'_i : V_i\longrightarrow P(X')$ the conciliation of the  paths $h(x')^{-1}$, $ s"(x',y')$ and $h(y') $ denoted as follows:
 $$s'_i(x',y')=h(x')^{-1}*s"_i(x',y')*h(y')$$
 where $h(x')^{-1}(t)=h(x')(1-t)$ for all $(x',y')\in V_i$ and all $t\in I$. Thus $s'_i$ is a continuous local section of $\pi _{A'}:  P_X(A')\longrightarrow A'$. More explicitly, $s'_i$ is given as follows:
 \begin{equation*}
 s'_{i}(x',y')(t)\left\{ 
 \begin{array}{c}
 H(x',3t)\text{ \ for \ }0\leq t\leq \frac{1}{3} \\ 
 u[s_{i}(v(x'),v(y'))(3t-1)]\text{ \ for \ }\frac{1}{3}\leq t\leq \frac{2}{3}
 \\ 
H(y',3t-1)\text{ \ for }\frac{2}{3}\leq t\leq 1%
 \end{array}%
 \right.
 \end{equation*}%
   Consequently, we have $TC^{(f,g,Z)}(X)\geq TC^{(f\circ v,g\circ v,Z)}(X')$.
   
    To obtain the other inequality,  let $V_{1},\ldots , V_{k}$ be  a covering  of $%
   X'\times _{Z}X'$ (with respect to $f\circ v,\; g\circ v: X'\longrightarrow Z$)   equipped with $k$ local continuous   sections $s'_{i}:V_{i}\longrightarrow P(X')$ such that $%
   s'_{i}(x',y')(0)=x'$ and $s'_{i}(x',y')(1)=y'$ for all $(x',y')\in V_{i}$ and consider $U_i =(u\times u)^{-1}(V_i)$ ($1\leq i\leq k$) and $ H': X\times I\longrightarrow X$ the   homotopy between $Id_{X}$ and $v\circ u$. The following equation:
   \begin{equation*}
   s _{i}(x,y)(t)\left\{ 
   \begin{array}{c}
   H'(x,3t)\text{ \ for \ }0\leq t\leq \frac{1}{3} \\ 
   v[s'_{i}(u(x),u(y))(3t-1)]\text{ \ for \ }\frac{1}{3}\leq t\leq \frac{2}{3}
   \\ 
   H'(y,3t-1)\text{ \ for }\frac{2}{3}\leq t\leq 1%
   \end{array}%
   \right.
   \end{equation*}%
  asserts that  $TC^{(f,g,Z)}(X)\leq TC^{(f\circ v,g\circ v,Z)}(X')$.   
\end{proof}
\begin{corollary}
Let $f: X\longrightarrow Z$  be a continuous map and $v: X'\longrightarrow X$ an homotopy equivalence. Then $TC(f)=TC(f\circ v)$. That is $TC(f)$ is a FHE invariant.
\end{corollary}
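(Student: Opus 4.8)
The plan is to read this off from the preceding theorem (Theorem 6, the fiberwise homotopy equivalence invariance of $TC^{(f,g,Z)}$) by specialising the two maps to be equal; essentially no new argument is needed.

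First I would check that $TC(f\circ v)$ is even defined. Since $v\colon X'\to X$ is a homotopy equivalence and $X$ is path connected, $v$ induces a bijection on path components, hence $X'$ is path connected; therefore $f\circ v\colon X'\to Z$ is a continuous map out of a path connected space, so Definition 1 applies to it. In particular, all the hypotheses needed to invoke Theorem 6 for the pair of maps $f$ and $g=f$ together with the homotopy equivalence $v$ are in place.

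Then I would apply Theorem 6 with $g=f$; its conclusion reads
$$TC^{(f,f,Z)}(X)=TC^{(f\circ v,\,f\circ v,\,Z)}(X').$$
By Definition 1 the left-hand side equals $TC(f)$, and the same definition, applied this time to the map $f\circ v\colon X'\to Z$, identifies the right-hand side with $TC(f\circ v)$. Combining these three equalities gives $TC(f)=TC(f\circ v)$, which is the claim; the concluding phrase "$TC(f)$ is an FHE invariant" is merely the interpretation of this identity, namely that replacing the source $X$ by a homotopy equivalent space $X'$ (and $f$ by its composite $f\circ v$) leaves $TC$ unchanged — the analogue, for our variant, of Pavešić's invariance statement recalled in the introduction. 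Since everything is a direct corollary of Theorem 6, there is no genuine obstacle here: the only point one must not overlook is the verification of the standing path connectedness hypothesis, handled above, after which the proof is just an unwinding of Definition 1.
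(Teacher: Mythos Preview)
Your proposal is correct and matches the paper's approach: the corollary is stated without proof in the paper, as an immediate specialisation of Theorem 6 with $g=f$, which is exactly what you do. Your additional verification that $X'$ is path connected is a sensible hygiene check that the paper leaves implicit.
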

 Being a particular case of the relative topological complexity \cite{Far08},  $TC^{(f,g,Z)}(X)$ satisfies the following
\begin{proposition} With the notations above, we have
\begin{enumerate}
\item $TC^{(f,g,Z)}(X)\leq TC(X)$.
\item If $A=X\times _ZX\subseteq B\subseteq X\times X$ and the inclusion map $B\hookrightarrow X\times X$ is homotopic to a map $B\rightarrow X\times X$ whose values are in $A$, then $TC^{(f,g,Z)}(X)=TC_X(B)$. 
\end{enumerate}
\end{proposition}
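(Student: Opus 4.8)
The plan is to read both items off the same two soft properties of the Schwarz genus of the restricted path fibration $\pi_A : P_A(X)\to A$: \emph{monotonicity} under inclusions of subspaces of $X\times X$, and \emph{invariance} when such a subspace is deformed inside $X\times X$; this is precisely how the relative topological complexity is handled in \cite{Far08}. Throughout I write $A = X\times_Z X$, so that $TC^{(f,g,Z)}(X) = TC_X(A)$ by definition, and I use that $TC(X) = TC_X(X\times X)$, since $P_{X\times X}(X) = P(X)$ and $\pi_{X\times X} = \pi$.

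For (1) I would just restrict sections. If $\{U_i\}_{i=1}^{TC(X)}$ is an open cover of $X\times X$ carrying continuous local sections $s_i : U_i\to P(X)$ of $\pi$, then $\{U_i\cap A\}_{i=1}^{TC(X)}$ is an open cover of $A$ and the $s_i|_{U_i\cap A}$ are continuous local sections of $\pi_A$; hence $TC_X(A)\le TC(X)$, which is the assertion. Running the same argument with $X\times X$ replaced by any $B$ satisfying $A\subseteq B\subseteq X\times X$ yields the monotonicity inequality $TC_X(A)\le TC_X(B)$, which I will reuse below.

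For (2), monotonicity already gives $TC^{(f,g,Z)}(X) = TC_X(A)\le TC_X(B)$, so it remains to prove $TC_X(B)\le TC_X(A)$, and here I would reproduce the concatenation construction from the proof of the fiberwise homotopy invariance theorem above. Let $k = TC_X(A)$, witnessed by an open cover $\{W_j\}_{j=1}^k$ of $A$ and continuous local sections $\sigma_j : W_j\to P(X)$ with $\sigma_j(a)(0) = p_1(a)$ and $\sigma_j(a)(1) = p_2(a)$, where $p_1,p_2 : X\times X\to X$ are the projections. Let $\rho : B\to X\times X$ be a map with $\rho(B)\subseteq A$ and let $G = (G_1,G_2) : B\times I\to X\times X$ be a homotopy from the inclusion $B\hookrightarrow X\times X$ to $\rho$. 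Put $U_j = \rho^{-1}(W_j)$; these are open and cover $B$ because $\rho(B)\subseteq A = \bigcup_j W_j$. For $b\in U_j$ define $\tau_j(b)\in P(X)$ to be the concatenation, reparametrized onto $[0,\frac13]$, $[\frac13,\frac23]$, $[\frac23,1]$, of the paths $t\mapsto G_1(b,t)$ (from $p_1(b)$ to $p_1(\rho(b))$), $\sigma_j(\rho(b))$ (from $p_1(\rho(b))$ to $p_2(\rho(b))$, legitimate since $\rho(b)\in W_j$), and $t\mapsto G_2(b,1-t)$ (from $p_2(\rho(b))$ to $p_2(b)$). Then $\tau_j(b)(0) = p_1(b)$ and $\tau_j(b)(1) = p_2(b)$, so the $\tau_j$ are local sections of $\pi_B$ over the cover $\{U_j\}_{j=1}^k$, giving $TC_X(B)\le k$; together with the previous inequality this yields $TC^{(f,g,Z)}(X) = TC_X(B)$.

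The one step needing care — just as in the proof of the fiberwise homotopy invariance theorem — is the continuity of $b\mapsto\tau_j(b)$ as a map $U_j\to P(X)$. I would deduce it from the exponential correspondence $\mathcal{C}^0(U_j,P(X))\cong\mathcal{C}^0(U_j\times I,X)$ (valid because $I$ is compact, as already invoked above): the adjoint $(b,t)\mapsto\tau_j(b)(t)$ is given by manifestly continuous formulas on the closed sets $U_j\times[0,\frac13]$, $U_j\times[\frac13,\frac23]$, $U_j\times[\frac23,1]$ — the middle one being the composite of the continuous map $b\mapsto\sigma_j(\rho(b))$ with the evaluation $P(X)\times I\to X$ — and these agree on the overlaps $t=\frac13$ and $t=\frac23$, so the pasting lemma applies. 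Everything else in (2) is routine bookkeeping with $p_1$, $p_2$ and the homotopy $G$.
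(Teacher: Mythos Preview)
Your argument is correct. The paper itself offers no proof of this proposition, merely prefacing it with the remark that $TC^{(f,g,Z)}(X)$ is a particular case of the relative topological complexity of \cite{Far08}; you have written out what that reference entails, and your concatenation construction in part~(2) is exactly the technique the paper uses in its proof of the FHE-invariance theorem above.
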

\section{Fibred Topological complexity  and LS-category}

In this section, unless otherwise stated, we consider the particular case where $Z=X$ is a pointed space at $x_0\in X$, $f: X\longrightarrow X$ is a pointed map and  $g=cst_{x_0}: X%
\longrightarrow X$, the constant map.  Thus  $X\times
_{Z}X=\{(x,y)\in X\times X\; \mid \; f(x)=x_0\}=f^{-1}\{x_0\}\times X$. The following proposition gives an invariant interpolating $cat(X)$ and $TC(X)$:

\begin{proposition}
$cat(X)\leq TC^{(f,cst_{x_0},X)}(X)\leq TC(X)$
\end{proposition}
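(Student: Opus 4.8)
The plan is to handle the two inequalities separately, the upper one being essentially a citation and the lower one a restriction argument.

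\medskip

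\emph{Upper bound.} The inequality $TC^{(f,cst_{x_0},X)}(X)\le TC(X)$ is nothing but item (1) of the preceding Proposition applied with $Z=X$ and $g=cst_{x_0}$. Equivalently, recalling that $TC^{(f,cst_{x_0},X)}(X)=TC_X(A)$ with $A=X\times_XX=f^{-1}(x_0)\times X\subseteq X\times X$, it follows from the monotonicity of the Schwarz genus under inclusion of the base: since $P_A(X)=\pi^{-1}(A)$, any open cover of $X\times X$ together with local sections of the path fibration $\pi$ restricts to an open cover of $A$ with local sections of $\pi_A$, so $TC_X(A)\le TC_X(X\times X)=TC(X)$.

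\medskip

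\emph{Lower bound.} The key point is that $f$ is a pointed map, so $f(x_0)=x_0$, whence $x_0\in f^{-1}(x_0)$ and therefore the slice $B_0:=\{x_0\}\times X$ is contained in $A=f^{-1}(x_0)\times X$. Under the identification $B_0\cong X$, the restriction of the path fibration $\pi:P(X)\to X\times X$ over $B_0$ is exactly the based path fibration $P_{x_0}(X)\to X$, $\gamma\mapsto\gamma(1)$, whose Schwarz genus is $cat(X)$. Applying once more the monotonicity of the genus to the inclusion $B_0\subseteq A$ yields $cat(X)=TC_X(B_0)\le TC_X(A)=TC^{(f,cst_{x_0},X)}(X)$. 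Combining the two bounds gives the statement.

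\medskip

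If one prefers an argument written out by hand: starting from an open cover $\{U_i\}_{i=1}^{k}$ of $A$ with continuous local sections $s_i:U_i\to P(X)$ satisfying $s_i(x,y)(0)=x$ and $s_i(x,y)(1)=y$, set $W_i:=U_i\cap B_0$, an open subset of $B_0\cong X$. The $W_i$ cover $X$, and $y\mapsto s_i(x_0,y)$ defines a continuous family of paths from $x_0$ to $y\in W_i$, i.e. a contraction of $W_i$ to $x_0$ inside $X$. Hence $\{W_i\}_{i=1}^{k}$ is a categorical cover of $X$ and $cat(X)\le k$; taking $k=TC^{(f,cst_{x_0},X)}(X)$ finishes the proof.

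\medskip

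The only step requiring care — the "hard part" — is the identification $TC_X(\{x_0\}\times X)=cat(X)$: one must recall that $cat(X)$ is the sectional category of the based path fibration and verify that the fibration $\pi_A:P_A(X)\to A$ restricted over the slice $\{x_0\}\times X$ is precisely that fibration. The remaining ingredients (monotonicity of the genus under base inclusion, and the use of $f$ being pointed to get $\{x_0\}\times X\subseteq A$) are routine.
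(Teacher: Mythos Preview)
Your proof is correct and follows essentially the same approach as the paper: the upper bound is item~(1) of the preceding Proposition, and for the lower bound the paper defines $\alpha(y)=(x_0,y)$, pulls the cover back to $V_i=\alpha^{-1}(U_i)$, and uses $h_i(y,t)=s_i(\alpha(y))(t)$ as the null-homotopy --- exactly your ``by hand'' paragraph. Your explicit remark that $f$ being pointed is what guarantees $\{x_0\}\times X\subseteq f^{-1}(x_0)\times X$ makes a point the paper uses tacitly.
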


\begin{proof} We already know that $TC^{(f,f_{{x_0}},X)}(X)\leq TC(X).$ Now, let $%
n=TC^{(f,f_{{x_0}},X)}(X)$. Denote by  $U_{1},\ldots , U_{n}$ an open cover of $A=: f^{-1}\{{x_0}\}\times X$ and for each $1\leq i\leq n$;  $s_{i}:U_{i}\longrightarrow P_A(X)$  a
continuous section of $\pi _A:  P_X(A)\longrightarrow A$ so that $\pi _A\circ s_{i}=Id_{U_{i}}$. Let $\alpha
:X\longrightarrow X\times X$ \ be the map defined by  $\alpha (y)=
(x_0,y)$ and   $V_{i}=\alpha ^{-1}(U_{i})$ ($1\leq i\leq n$). 
Obviously, $
V_{1},\ldots , V_{k}$ \ form an open cover of $X$. Moreover, each  $%
h_{i}(y,t)=(s_{i}\circ \alpha (y))(t)$ is a homotopy between $cst_{x_0}$ and $Id_{V_i}$. Hence, $%
cat(X)\leq TC^{(f,f_{x_0},X)}(X)$.
\end{proof}
By \cite{Far03}, we have:
\begin{corollary}
If $G$ is a topological group, then $cat(G)=TC^{(f,cst_{e_G},G)}(G)=TC(G)$.
\end{corollary}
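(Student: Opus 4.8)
The plan is to obtain the whole chain of equalities as a direct squeeze. Specialize the preceding proposition to $X=G$, base point $x_0=e_G$, and the given pointed self-map $f:(G,e_G)\to (G,e_G)$: since $G$ is path connected, that proposition applies and yields
$$cat(G)\ \leq\ TC^{(f,cst_{e_G},G)}(G)\ \leq\ TC(G).$$
It therefore suffices to identify the two outer terms, and this is exactly the classical theorem of Farber \cite{Far03}, already invoked just above the statement, asserting that $TC(G)=cat(G)$ for every (path connected) topological group $G$. Combining the two facts forces all three quantities to agree.

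For completeness I would recall in one line the mechanism behind Farber's equality. Left translation gives a fiberwise homeomorphism, over $G\times G$, between the path fibration $\pi:P(G)\to G\times G$ and the pullback of the based path fibration $P_{e_G}(G)\to G$ (evaluation at the endpoint) along the map $G\times G\to G$, $(x,y)\mapsto x^{-1}y$: a path $\delta$ from $x$ to $y$ corresponds to the path $t\mapsto x^{-1}\delta(t)$ from $e_G$ to $x^{-1}y$. Since $(x,y)\mapsto x^{-1}y$ admits the section $x\mapsto (e_G,x)$, the Schwarz genus is unchanged by this pullback, whence $TC(G)=\mathrm{secat}(\pi)=\mathrm{secat}\big(P_{e_G}(G)\to G\big)=cat(G)$. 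Feeding $TC(G)=cat(G)$ back into the displayed inequalities gives $cat(G)=TC^{(f,cst_{e_G},G)}(G)=TC(G)$.

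There is essentially no obstacle here beyond bookkeeping: the only hypothesis worth flagging is that $G$ be path connected, which is what makes the preceding proposition (and the very definition of $TC^{(f,g,Z)}$) available, and under which Farber's equality is stated. No new construction is needed — the corollary is a purely formal consequence of Proposition 9 and \cite{Far03}.
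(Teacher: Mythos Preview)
Your argument is correct and matches the paper's own approach: the corollary is stated immediately after Proposition 9 with the phrase ``By \cite{Far03}, we have,'' i.e., it is obtained exactly by sandwiching $TC^{(f,cst_{e_G},G)}(G)$ between $cat(G)$ and $TC(G)$ via Proposition 9 and then invoking Farber's equality $TC(G)=cat(G)$ for topological groups. Your added paragraph recalling the mechanism behind Farber's equality is extra detail the paper omits, but it is accurate and does not alter the strategy.
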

\subsection{Algorithms starting or ending on a fixed point $x_0$}
As a particular case of the proposition, by taking $f=Id_X$ we have $X\times _{Z}X=\{x_0\}\times X$, we obtain new interpretations of LS-category as follows: 
\begin{corollary}
If $X$ is a path connected  space , then   $$TC^{(Id_X,cst_{x_0},X)}(X)%
= cat(X)=TC^{(cst_{x_0},Id_X,X)}(X).$$
\end{corollary}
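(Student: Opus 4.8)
The plan is to recognize, for $f=Id_X$, that the fibration whose Schwarz's genus defines $TC^{(Id_X,cst_{x_0},X)}(X)$ is nothing but the classical path--loop fibration of $X$, whose genus is $cat(X)$, and then to obtain the companion statement from the first by reversing paths. First I would unwind the definitions: with $f=Id_X$ and $g=cst_{x_0}$ in the setting of Proposition~9 one has $f^{-1}\{x_0\}=\{x_0\}$, hence $A:=X\times_ZX=\{x_0\}\times X$, which the projection onto the second factor identifies homeomorphically with $X$. Under this identification the subspace $P_A(X)\subseteq P(X)$ is precisely the space $P_{x_0}X$ of paths $\gamma$ in $X$ with $\gamma(0)=x_0$, and the fibration $\pi_A:P_A(X)\longrightarrow A$ becomes the endpoint evaluation $\mathrm{ev}_1:P_{x_0}X\longrightarrow X$, $\gamma\mapsto\gamma(1)$.

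Next I would compute the genus of $\mathrm{ev}_1$. A continuous local section $s:U\longrightarrow P_{x_0}X$ of $\mathrm{ev}_1$ over an open set $U\subseteq X$ is, by the exponential law already invoked in the proof of Theorem~6, the same datum as a homotopy $U\times I\to X$ from the constant map at $x_0$ to the inclusion $U\hookrightarrow X$. Hence an open cover $\{U_i\}_{i=1}^{k}$ of $X$ admits such local sections over all its members if and only if each $U_i$ is contractible in $X$, that is, if and only if it is a categorical cover. This is the classical identification of $cat(X)$ with the Schwarz's genus of the path--loop fibration, and it yields
$$TC^{(Id_X,cst_{x_0},X)}(X)=TC_X(A)=cat(X),$$
which in particular sharpens the inequalities $cat(X)\le TC^{(Id_X,cst_{x_0},X)}(X)\le TC(X)$ of Proposition~9 to an equality on the left.

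For the remaining equality I would use the symmetry $TC^{(f,g,Z)}(X)=TC^{(g,f,Z)}(X)$: the coordinate swap $(x,y)\mapsto(y,x)$ is a homeomorphism $X\times_Z^{f,g}X\cong X\times_Z^{g,f}X$, and path reversal $\gamma\mapsto\bar\gamma$, $\bar\gamma(t)=\gamma(1-t)$, is a homeomorphism of $P(X)$ lying over it which carries $\pi_A$ to $\pi_{A'}$; it therefore transports local--section data between the two fibrations, so the genera agree. Applying this with $f=Id_X$ and $g=cst_{x_0}$ gives $TC^{(cst_{x_0},Id_X,X)}(X)=TC^{(Id_X,cst_{x_0},X)}(X)=cat(X)$. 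Equivalently, one can argue directly: for $f=cst_{x_0}$, $g=Id_X$ one gets $A=X\times\{x_0\}$, $P_A(X)$ is the space of paths ending at $x_0$, and $\pi_A$ is evaluation at $0$, which path reversal identifies once again with $\mathrm{ev}_1:P_{x_0}X\to X$.

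The only non--formal ingredient is the identification of the Schwarz's genus of $\mathrm{ev}_1:P_{x_0}X\to X$ with $cat(X)$; everything else is bookkeeping with path spaces. The main point to watch — where a slip is easiest — is that the normalizations for open covers used for $TC_X(-)$ and for $cat(-)$ in this paper must be the same (both non--reduced), and that, as recorded in the remark after Definition~4, a local section of $\pi_A$ followed by $P_A(X)\hookrightarrow P(X)$ is exactly the data appearing in Definition~4.
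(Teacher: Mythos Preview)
Your argument is correct and is essentially the paper's own: both identify $A=\{x_0\}\times X\cong X$ and use the exponential-law correspondence between local sections $U\to P_{x_0}X$ of $\mathrm{ev}_1$ and contracting homotopies $U\times I\to X$, which is exactly the content of the paper's proof (it shows $TC^{(Id_X,cst_{x_0},X)}(X)\le cat(X)$ this way and invokes Proposition~9 for the reverse inequality, citing \cite[Lemma~4.23]{Far08}). Your treatment is slightly more complete in that you explicitly derive the second equality $TC^{(cst_{x_0},Id_X,X)}(X)=cat(X)$ via the coordinate-swap/path-reversal symmetry, which the paper leaves implicit.
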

\begin{proof} Indeed, this is exactly the relation proved earlier in \cite[Lemma 4.23]{Far08} to which we recall the proof here. 
Let  $n=cat(X)$. There is an open cover $U_{1},\ldots ,U_{n}$ and for each $1\leq i\leq n$, a homotopy $h_{i}:U_{i}\times I\longrightarrow X$
satisfying: $h_{i}(x,0)=x_0$ and $h_{i}(x,1)=x$. 
Let  $\beta :\{x_0\}\times
X\longrightarrow X$ defined by $\beta (x_0,x)=x$. The subspaces   $V_{i}=\beta
^{-1}(U_{i})$ ($1\leq i\leq n$) obviously form an open cover of $\{x_0\}\times X$. Hence, each map $s_i$ defined on $V_i$ by $s_{i}(x_0,x)(t)=h_{i}(x,t)$   is a  section of the 
application $\pi :PX\longrightarrow V_{i}$ so that $TC^{(Id_X,cst_{x_0},X)}(X)%
\leq cat(X)$.
\end{proof}

\ 
\subsection{Loop free and based loop algorithms}

In this subsection we denote  by $LX=\{\gamma \in PX \; \mid \; \gamma (0)=\gamma (1)\}$ the space  of free loops in $X$. 
\begin{definition}
We call a fibered loop motion planning algorithm any continuous map $s_{LP}^{(f,g,Z)}:A=X\times _{Z}X\longrightarrow
LX$ satisfying : $s_{LP}^{(f,g,Z)}(x,y)(0)=s_{LP}^{(f,g,Z)}(x,y)(1)=x$
et $s_{LP}^{(f,g,Z)}(x,y)(\frac{1}{2})=y.$
\end{definition}
\begin{definition}
We define the fibered loop topological complexity,  the smallest integer $n$
denoted by $TC_{LP}^{(f,g,Z)}(X)$ for which there exist an open cover $%
U_{1}, \ldots ,U_{n}$ of $X\times _{Z}X$ and continuous sections $%
s_{i}:U_{i}\longrightarrow LX$ of $\pi _A: P(X)\rightarrow A$,
  satisfying $%
s_{i}(x,y)(0)=s_{i}(x,y)(1)=x$ and $s_{i}(x,y)(\frac{1}{2})=y.$ If there is
no integer n satisfying these conditions we put $TC_{LP}^{(f,g,Z)}(X)=\infty 
$
\end{definition}
\begin{proposition}
For any topological space $X$, we have : $TC_{LP}^{(f,g,Z)}(X)=TC^{(f,g,Z)}(X)$.
\end{proposition}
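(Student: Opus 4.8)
The plan is to establish the equality by a two-sided inequality, each direction coming from an explicit construction that converts a motion planning section into a loop section and back. The two functionals are Schwarz genera of fibrations over the same base $A = X\times_Z X$, so it suffices to produce, for each open cover of $A$, local sections of one kind from local sections of the other kind over the \emph{same} cover; this immediately matches the minimal cardinalities.

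First I would prove $TC_{LP}^{(f,g,Z)}(X)\leq TC^{(f,g,Z)}(X)$. Let $n=TC^{(f,g,Z)}(X)$, with open cover $\{U_i\}_{i=1}^n$ of $A$ and local sections $s_i:U_i\to P(X)$ satisfying $s_i(x,y)(0)=x$, $s_i(x,y)(1)=y$. From each $s_i$ I build $\sigma_i:U_i\to LX$ by concatenating the path $s_i(x,y)$ with its reverse: explicitly
\begin{equation*}
\sigma_i(x,y)(t)=\begin{cases} s_i(x,y)(2t) & 0\leq t\leq \tfrac12,\\ s_i(x,y)(2-2t) & \tfrac12\leq t\leq 1.\end{cases}
\end{equation*}
Then $\sigma_i(x,y)(0)=\sigma_i(x,y)(1)=x$ and $\sigma_i(x,y)(\tfrac12)=y$, and $\sigma_i$ is continuous (concatenation of continuous path-valued maps, agreeing at the splicing parameter, is continuous since $I$ is paracompact, as used already in the proof of Theorem~6). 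Hence $\{U_i,\sigma_i\}$ witnesses $TC_{LP}^{(f,g,Z)}(X)\leq n$.

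For the reverse inequality $TC^{(f,g,Z)}(X)\leq TC_{LP}^{(f,g,Z)}(X)$, let $m=TC_{LP}^{(f,g,Z)}(X)$ with cover $\{U_i\}_{i=1}^m$ and loop sections $s_i:U_i\to LX$ satisfying $s_i(x,y)(0)=s_i(x,y)(1)=x$ and $s_i(x,y)(\tfrac12)=y$. I simply restrict each loop to its first half: set $\tilde s_i(x,y)(t)=s_i(x,y)(t/2)$ for $t\in I$. Then $\tilde s_i(x,y)(0)=x$ and $\tilde s_i(x,y)(1)=s_i(x,y)(\tfrac12)=y$, so $\{U_i,\tilde s_i\}$ is an admissible system for $TC^{(f,g,Z)}(X)$ over the same cover, giving $TC^{(f,g,Z)}(X)\leq m$. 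Combining the two inequalities yields the claimed equality.

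The only point requiring care — and the one I would flag as the main (though mild) obstacle — is the continuity of the concatenation map in the first direction: one must check that $(x,y)\mapsto \sigma_i(x,y)$ is continuous into $P(X)$ with the compact-open topology. This is handled exactly as in the proof of Theorem~6, using the exponential-law homeomorphism $\mathcal{C}^0(U_i,P(X))\cong \mathcal{C}^0(U_i\times I, X)$ valid because $I$ is paracompact (locally compact Hausdorff); under this identification $\sigma_i$ corresponds to the evidently continuous map $U_i\times I\to X$ given by the two-branch formula above, whose branches agree on the closed set $\{t=\tfrac12\}$. Nothing about $f$, $g$ or $Z$ enters beyond the fact that $A=X\times_Z X$ is the common base, so the argument is entirely formal once this topological lemma is invoked.
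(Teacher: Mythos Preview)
Your proof is correct and is precisely the standard argument the paper defers to: the paper's own proof is the one-line remark ``Similar to the proof of \cite[Theorem 2]{D-M} where it is proven that $TC(X)=TC^{LP}(X)$,'' and what you have written is exactly that argument (path $\leadsto$ path$*$reverse, loop $\leadsto$ first half) specialized to the base $A=X\times_Z X$. There is nothing to add.
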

\begin{proof}
Similar to the proof of \cite[Theorem 2]{D-M}  where it is proven that $TC(X) = TC^{LP}(X)$.
\end{proof}

Next, we assume that $Z=X$ is a based space on $x_0\in X$, $g=cst_{x_0}$. We denote by $\Omega _{x_0}X=\{\gamma \in PX\; \mid \; \gamma
(0)=\gamma (1)=x_0\}$, the space of based loops in $X$. 
\begin{definition}
We call a fibered based loop  motion planning algorithm (rel. $f: X\rightarrow X$), any continuous map $%
s_{LP}^{(f,cst_{x_0},X)}:X\times _{X}X=\{x_0\}\times X\longrightarrow \Omega _{x_0}X$
such that  $s_{LP}^{(f,cst_{x_0},X)}(x_0,y)(0)=s_{LP}^{(f,cst_{x_0},X)}(x_0,y)(1)=x_0$
and $s_{LP}^{(f,cst_{x_0},X)}(x_0,y)(\frac{1}{2})=y$.
\end{definition}
\begin{definition}
We define a fibered  based loop topological complexity (rel. f) denoted by $TC_{LP}^{(f, cst_{x_0} ,X)}(X)$ to be  the smallest
integer $n$   for which there exist an
open cover $U_{1},\ldots ,U_{n}$ of $X\times _{X}X=f^{-1}(\{x_0\})\times X$ and 
continuous sections $s_{i}:U_{i}\longrightarrow \Omega _{x_0}X$ such that $%
s_{i}(x_0,y)(0)=s_{i}(x_0,y)(1)=x_0$ and $s_{i}(x_0,y)(\frac{1}{2})=y$.
\end{definition}
It follows from Corollary 9 and Proposition 14 that 
\begin{proposition}
If $X$ is  path connected based space, then $TC_{LP}^{(Id_X,cst_{x_0},X)}(X)=cat(X)=TC_{LP}^{(cst_{x_0}, Id_X, X)}(X)$.
\end{proposition}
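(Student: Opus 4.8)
The plan is to combine two results already established in this section: Corollary~9, which states that
$$TC^{(Id_X,cst_{x_0},X)}(X)=cat(X)=TC^{(cst_{x_0},Id_X,X)}(X),$$
and Proposition~14, which states that for any pair $f,g:X\to Z$ one has the equality $TC_{LP}^{(f,g,Z)}(X)=TC^{(f,g,Z)}(X)$ between the fibered loop topological complexity and the ordinary fibered topological complexity. Applying Proposition~14 twice, once with the data $(Id_X,cst_{x_0},X)$ and once with $(cst_{x_0},Id_X,X)$, we get
$$TC_{LP}^{(Id_X,cst_{x_0},X)}(X)=TC^{(Id_X,cst_{x_0},X)}(X)\quad\text{and}\quad TC_{LP}^{(cst_{x_0},Id_X,X)}(X)=TC^{(cst_{x_0},Id_X,X)}(X),$$
and then chaining these with Corollary~9 yields the claimed double equality. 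So the proof is essentially two lines of bookkeeping.

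The one point that deserves a sentence of justification is that Proposition~14 is genuinely applicable here: its statement is for arbitrary $f,g:X\to Z$, and in particular it covers the degenerate situation $Z=X$ with one of the two maps constant, which is exactly what we need. I would also want to remark that the based-loop version $TC_{LP}^{(f,cst_{x_0},X)}$ defined just above agrees with the free-loop version $TC_{LP}^{(f,cst_{x_0},X)}$ appearing in Proposition~14 in this case, because when the fiber product is $\{x_0\}\times X$ every free loop produced by a section automatically starts and ends at $x_0$ (the first coordinate of a point of the base is forced to be $x_0$), so the loops lie in $\Omega_{x_0}X$ rather than merely in $LX$. This identification is what licenses writing $TC_{LP}^{(Id_X,cst_{x_0},X)}(X)$ for the based-loop invariant and equating it with the free-loop one.

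I do not expect any real obstacle. The only thing to be careful about is the indexing/numbering: the statement refers to ``Corollary~9 and Proposition~14'', so I should make sure the cross-references point to the correct earlier results (the corollary giving $TC^{(Id_X,cst_{x_0},X)}(X)=cat(X)=TC^{(cst_{x_0},Id_X,X)}(X)$, and the proposition giving $TC_{LP}^{(f,g,Z)}(X)=TC^{(f,g,Z)}(X)$), and that the reader can see the two substitutions $(f,g,Z)=(Id_X,cst_{x_0},X)$ and $(f,g,Z)=(cst_{x_0},Id_X,X)$ being made. A clean way to present it is:

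\begin{proof}
By Proposition~14 applied to $(f,g,Z)=(Id_X,cst_{x_0},X)$ and to $(f,g,Z)=(cst_{x_0},Id_X,X)$, we have
$$TC_{LP}^{(Id_X,cst_{x_0},X)}(X)=TC^{(Id_X,cst_{x_0},X)}(X),\qquad TC_{LP}^{(cst_{x_0},Id_X,X)}(X)=TC^{(cst_{x_0},Id_X,X)}(X).$$
In each case the fiber product is $\{x_0\}\times X$, so any free loop in the image of a local section has its basepoint equal to $x_0$; hence these free-loop sections take values in $\Omega_{x_0}X$ and the invariants above coincide with the based-loop invariants of Definitions~17--18. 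Combining with Corollary~9, which gives $TC^{(Id_X,cst_{x_0},X)}(X)=cat(X)=TC^{(cst_{x_0},Id_X,X)}(X)$, we conclude
$$TC_{LP}^{(Id_X,cst_{x_0},X)}(X)=cat(X)=TC_{LP}^{(cst_{x_0},Id_X,X)}(X).$$
\end{proof}
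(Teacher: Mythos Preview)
Your proposal is correct and matches the paper's own approach exactly: the paper gives no argument beyond the line ``It follows from Corollary~9 and Proposition~14,'' and you have simply spelled out those two substitutions. One small slip in your extra commentary: for $(cst_{x_0},Id_X,X)$ the fiber product is $X\times\{x_0\}$, not $\{x_0\}\times X$, so the free loops there are based at $x$ rather than at $x_0$; but this is irrelevant to the proof, since Proposition~14 applies to the free-loop invariant in any case and the based-loop interpretation is only needed (and only defined) for the first triple.
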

\section{Some examples and remarks}
In this section, we give some examples and two remarks to illustrate the possible use  of the topological  complexity of a map.
\begin{example}
\begin{enumerate}
\item Considering the  $n^{-th}$-projective space  $\mathbb{R}P^n$ as the quotient of $S^n$ by $G=\mathbb{Z}_2$, we have the projection map $p : S^n\rightarrow \mathbb{R}P^n$. Following, for instance,  \cite[Example 4.1]{Far08}, we deduce that $TC(p)=TC_{S^n}(A)= 1\; \hbox{or} \; 2$  for $n$  odd or even respectively. Here, $A= \{(x,-x); x\in S^n\}\cup \Delta (S^n)$, being $\Delta (S^n)$ the diagonal of $S^n$.
\item Let $G=\mathbb{Z}_2$  acting on $X= T^2 =S^1\times S^1$ by putting $(-1)(z,\omega)=(\bar{z}=1/z, -\omega)$. The  quotient space is the  Klein bottle $K$. Thus, if $p : T^2\rightarrow K$ stands for the projection map, we have $TC(p)\leq TC(T^2)=3$. Now,
 $TC(p)=TC_X(A)$ where $A=\{((z,\omega),(\bar{z}, -\omega)); (z,\omega)\in T^2\}\cup \Delta (T^n)$. 
 It results that  $TC(p)$ measures the complexity of  motion planing algorithms between  pairs of points $(z,\omega)$ and $(\bar{z},-\omega)$ which are identified to form an orbit in $K=T^2/\mathbb{Z}_2$. 
\item Let $\iota : K\rightarrow \mathbb{R}^3$ be  an immersion of the Klein bottle in the $3$-dimensional space $\mathbb{R}^3$. By \cite{C-V}, we know that $TC(K)=4$ so that $TC(\iota)\leq 4$. In this case, $TC(\iota)=TC_K(A)$ where $A=\{(x,y)\in K\times K; \iota (x)=\iota (y)\}$ stands for  the preimage of  
 the part where $K$  auto-intersect. Thus $TC(\iota)$ measures   the complexity of  motion planing algorithms whose endpoints are sent in that part. 
 \end{enumerate}
 \end{example}
 \begin{remark}
 \begin{enumerate}
  \item 
  By \cite[Proposition 4.24]{Far08}, for any covering of $X\times X$ by locally compact subsets $A_1, A_2, \ldots , A_r$   we have $TC_X(A_i)\leq TC(X)\leq TC_X(A_1)+\ldots +TC_X(A_r)$ ($1\leq i\leq r$). Thus, one can make use of $r$ continuous maps 
  $f_i : X\rightarrow Z$  and put for each $1\leq i\leq r$,  $A_i =(f_i\times f_i)^{-1}(\Delta (Z))$ so that the inequality above becomes  $TC(f_i)\leq TC(X)\leq TC(f_1)+\ldots +TC(f_r)$.
  
  Thanks to the interpretation of $TC(f)$ given in the introduction, an appropriate choice of $Z$ is then imperative and will allow to determine the value of $TC(X)$ and also the covering of $X\times X $ which realizes it.
  \item Obviously, if $f : X\rightarrow Z$ is an injective map, then $TC(f)=TC_X(\Delta (X))=1$ \cite[Lemma 4. 21]{Far08}. Moreover, if $f_1, f_2 : X\rightarrow Z$ are maps such that $(f_1\times f_1)^{-1}(\Delta (Z))\subseteq  (f_2\times f_2)^{-1}(\Delta (Z))$, then $TC(f_1)\leq TC(f_2)$. Thus,  by choosing  a sequence $(f_n: X\longrightarrow Z)_{n\geq 0}$ of continuous maps such that $(TC(f_n))_{n\geq 1}$ is an  increasing sequence,  we then obtain an approximation from  below: ${\lim}_{n\rightarrow \infty}  TC(f_n)\leq TC(X)=TC(cst_{z_0})$. Clearly, the choice of the sequence  $(f_n)$ depends on the "degrees" of non-injectivity of its terms.
 \end{enumerate}
 \end{remark}

\end{document}